\newtheorem{example}{\bf Example}
\newtheorem{lemma}{\bf Lemma}
\newtheorem{theorem}{\bf Theorem}
\numberwithin{equation}{section}
\newcommand\dv{\mathrm{div}}
\begin{document}

\title[A note on gradient Einstein-type manifolds]{A note on gradient Einstein-type manifolds}

\author[José N.V. Gomes]{José Nazareno Vieira Gomes}
\address{Departamento de Matemática, Instituto de Ciências Exatas, Universidade Federal do Amazonas, Av. General Rodrigo Octávio, 6200, 69080-900 Manaus, Amazonas, Brazil.}
\email{jnvgomes@pq.cnpq.br,jnvgomes@gmail.com}
\urladdr{https://ufam.edu.br}
\keywords{Einstein-type manifolds, Rigidity results, Standard sphere.}
\subjclass[2010]{Primary 53C25; Secondary 53C24.}
\thanks{Partially supported by Grants 304608/2015-7 and 202234/2017-7, Conselho Nacional de Desenvolvimento Científico e Tecnológico (CNPq).}

\begin{abstract}
In this note, we show that a nontrivial, compact, degenerate or nondegenerate, gradient Einstein-type manifold of constant scalar curvature is isometric to the standard sphere with a well defined potential function. Moreover, under some geometric assumptions the noncompact case is also treated. In this case, the main result is that a homogeneous, proper, noncompact, nondegenerate, gradient Einstein-type manifold is an Einstein manifold.
\end{abstract}
\maketitle

\section{Introduction}
Let $(M^n,g)$ be a connected Riemannian manifold of dimension $n\geq3$. We denote by $Ric$ and $S$ its corresponding Ricci tensor and scalar curvature, respectively. We say that $(M^n,g)$ is an Einstein-type manifold or that $(M^n,g)$ supports an Einstein-type structure if there exist a smooth vector field $X$ on $M^n$ and a smooth function $\lambda:M^n\to\Bbb{R}$ such that
\begin{equation}\label{1-1}
\alpha Ric + \frac{\beta}{2}\mathscr{L}_Xg+\mu X^\flat\otimes X^\flat = (\rho S +\lambda)g
\end{equation}
for some constants $\alpha, \beta, \mu, \rho \in \Bbb{R}$, with $(\alpha,\beta,\mu)\neq (0,0,0)$. Here $\mathscr{L}$ and $X^\flat$ stand for the Lie derivative and the $1$--form metrically dual to the vector field $X$, respectively. If $X=\nabla f$ for some smooth function $f:M^n\to\Bbb{R}$, we say that $(M^n,g)$ is a gradient Einstein-type manifold. In this case, Eq.~\eqref{1-1} can be rewritten as
\begin{equation}\label{1-2}
\alpha Ric + \beta \nabla^2f+ \mu df\otimes df = (\rho S +\lambda)g,
\end{equation}
where $\nabla^2f$ stands for the Hessian of $f$. We refer to $f$ as the potential function.

The concept of Einstein-type manifold was introduced recently by Catino et al.~\cite{CMMR}. Notice that, in terms of Eq~\eqref{1-1}, an Einstein-type structure on a Riemannian manifold $(M^n,g)$ unifies various particular cases well studied in the literature, such as gradient Ricci solitons, gradient Ricci almost solitons, Yamabe solitons, Yamabe quasisolitons, conformal gradient solitons, $m$--quasi-Einstein manifolds and $\rho$--Einstein solitons. Each of them has a particular importance. Let us begin by making brief comments of three of these latter. Ricci almost solitons were introduced in Pigola et al.~\cite{prrs} merely as generalizations of the equation of Ricci solitons. Recently, it was discovered that a special family of Ricci almost solitons (namely, the $\rho$-Einstein solitons) arises from the Ricci-Bourguignon flow, see Catino et al.~\cite{Catino2} or Catino-Mazzieri~\cite{Catino-Mazzieri}, whereas the traditional Ricci solitons correspond to self-similar solutions to the Ricci flow and often arise as limits of dilations of singularities in the Ricci flow, see Hamilton~\cite{hamilton2}. The notion of an $m$--quasi-Einstein manifold, originated from the study of Einstein warped product manifolds, see Besse~\cite{besse}. More recently, Freitas Filho studied in his PhD thesis~\cite{Airton} a modified Ricci soliton as a class of Einstein type manifolds (or metrics) that contain both Ricci solitons and $m$--quasi-Einstein manifolds. This class is closely related to the construction of Ricci solitons that are realized as warped products. Moreover, a modified Ricci soliton appears as part of a self-similar solution of the modified Harmonic-Ricci flow which results in a new characterization of $m$--quasi-Einstein manifolds.

Coming back to the initial definition, we say that a gradient Einstein-type manifold is nondegenerate if $\beta^2\neq(n-2)\alpha\mu$ and $\beta\neq0$. Otherwise, if $\beta^2=(n-2)\alpha\mu$ and $\beta\neq0$ we have a degenerate gradient Einstein-type manifold. In~\cite{CMMR} the authors give a good justification for this terminology through an equivalence of a degenerate gradient Einstein-type manifold with a conformally Einstein manifold. We recall that a manifold $(M^n,g)$ is conformally Einstein if its metric $g$ can be pointwise conformally deformed to an Einstein metric $\tilde g$. The $\beta=0$ case was addressed separately in \cite{CMMR}. Remarkably, noncompact, nondegenerate, gradient Einstein-type manifolds can be locally characterized  when its Bach tensor is null. In particular, in the cases of $\rho$-Einstein solitons and Ricci almost solitons, two new characterizations have been shown, and in the special case $\alpha=0$ (which includes Yamabe solitons, Yamabe quasisolitons and conformal gradient solitons) a very precise description of the metric in this situation has been provided. Working in a more general setting is often better. For instance, in~\cite{CMMR} it was already possible to recover a local version of the results in Cao-Chen~\cite{Cao and Chen 2013} and Cao et al.~\cite{Cao et al. 2014}.

Our purpose is to study some cases of the Einstein-type manifolds which were not addressed in~\cite{CMMR}. More precisely, we focus our analysis on the gradient case with $\beta\neq0$, which includes both degenerate and nondegenerate cases. In particular, the nondegeneracy condition $\beta^2\neq(n-2)\alpha\mu$ is crucial in the proof of Lemma~\ref{lemma3}. In this regard, if $f$ is constant in Eq.~\eqref{1-2} we say that an Einstein-type structure is trivial, once that $(M^n,g)$ becomes an Einstein manifold. However, the converse is generally false. Indeed, under the Einstein assumption we prove in Lemma~\ref{lemma1} that the positive function $u=e^{\frac{\mu}{\beta}f}$ provides a special concircular field on $(M^n,g)$. We refer to Theorem~2 in Tashiro~\cite{Tashiro} to determine the structure of complete Riemannian manifolds admitting such a field. There are other settings in which we may find nontrivial gradient Einstein-type structures on Einstein manifolds, see~\cite{CMMR}.

This note consists of two sections. In the first one, we study the compact case where we derive a general formula for any gradient Einstein-type manifold with both $\beta$ and $\mu$ nonzero, see Eq.~\eqref{EqMain}. As an application we prove Theorem~\ref{thmA} which is a rigidity result on the class of the nontrivial, compact, gradient Einstein-type manifolds of constant scalar curvature. Its proof is the inspiration for the construction of the examples as well as for the establishment of the desired results. In the second one, under some natural geometric assumptions, we prove Theorem~\ref{thmNoncompact} which is a rigidity result on the class of the nontrivial, noncompact, gradiente Einstein-type manifolds of constant scalar curvature. Theorems~\ref{thmA} and \ref{thmNoncompact} are motivated by the corresponding results for generalized $m$--quasi Einstein manifolds proven in Barros-Gomes~\cite{bjng} and Barros-Ribeiro~\cite{BR}, respectively. The next step is to study conditions for a noncompact gradient Einstein-type manifold to be an Einstein manifold. This is the subject of study of Theorem~\ref{thm-Homo}.

\section{Compact case}

The first theorem of this note is a rigidity result on the standard sphere. Its proof is motivated by the corresponding result for generalized $m$--quasi Einstein manifolds proven in Barros-Gomes~\cite{bjng}. This latter can be obtained making $(\alpha,\beta,\mu,\rho)=(1,1,-1/m,0)$ in Eq.~\eqref{1-2}. We emphasize that we only require $\beta\neq0$ and both $\alpha$ and $\mu$ can happen to be zeros. However, to prove our Theorem~\ref{thmA}, it is enough to consider $\mu\neq0$, as the $\mu=\alpha=0$ case has already been covered in the Obata's theorem~\cite{obata}. Indeed, the existence of a nontrivial gradient conformal vector field on a compact Riemannian manifold of constant scalar curvature implies that the manifold is isometric to a standard sphere. For more detail, the reader may look the Yano's book~\cite[p.~54]{yano}. In the case of $\mu=0$ and $\alpha\neq0$ it is enough to use Corollary~1 in Barros-Ribeiro~\cite{br2} which says that a nontrivial compact gradient Ricci almost soliton of constant scalar curvature is isometric to a standard sphere.
\begin{theorem}\label{thmA}
Let $(M^n,g)$ be a nontrivial, compact, gradient Einstein-type manifold of constant scalar curvature with both $\beta$ and $\mu$ nonzero. Then, $(M^n,g)$ is isometric to the standard sphere $\Bbb{S}^n(r)$. Moreover, up to rescaling and constant, the potential function is given by $f=\frac{\beta}{\mu}\ln\big(\tau-\frac{h_v}{n} \big),$ where $\tau$ is a real parameter lying in $(\frac{1}{n},+\infty)$ and $h_{v}$ is the height function on the unitary sphere $\Bbb{S}^{n}$.
\end{theorem}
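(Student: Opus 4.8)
The plan is to linearise \eqref{1-2} through the exponential substitution anticipated in the introduction. Since $\beta\neq0$, the function $u=e^{\frac{\mu}{\beta}f}$ is positive and smooth, and a direct computation gives $\nabla u=\frac{\mu}{\beta}u\,\nabla f$ together with $\nabla^2u=\frac{\mu}{\beta}u\big(\nabla^2f+\frac{\mu}{\beta}\,df\otimes df\big)$. Feeding \eqref{1-2} into the parenthesis produces the general formula
\[
\nabla^2u=\frac{\mu}{\beta^2}\,u\big[(\rho S+\lambda)g-\alpha\,Ric\big],
\]
which is Eq.~\eqref{EqMain}. Writing $T=Ric-\frac{S}{n}g$ for the traceless Ricci tensor, its trace and trace-free parts read $\Delta u=\frac{\mu}{\beta^2}u\big[n(\rho S+\lambda)-\alpha S\big]$ and, crucially,
\[
\nabla^2u-\frac{\Delta u}{n}\,g=-\frac{\alpha\mu}{\beta^2}\,u\,T.
\]

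The core step is to show that $(M^n,g)$ is Einstein, i.e. $T\equiv0$. Here the hypothesis that $S$ is constant enters decisively: by the contracted second Bianchi identity $\dv Ric=\frac12\,dS=0$, hence $\dv T=0$ as well. On the compact manifold $M$ I would then compute $\int_M\langle T,\nabla^2u\rangle\,dV$ in two ways. Integrating by parts and using $\dv T=0$ gives $\int_M\langle T,\nabla^2u\rangle\,dV=-\int_M\langle \dv T,\nabla u\rangle\,dV=0$. On the other hand, since $T$ is trace-free one may replace $\nabla^2u$ by its trace-free part, and the displayed relation yields $\langle T,\nabla^2u\rangle=-\frac{\alpha\mu}{\beta^2}\,u\,|T|^2$. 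Comparing the two evaluations gives $\frac{\alpha\mu}{\beta^2}\int_M u\,|T|^2\,dV=0$; as $u>0$ and $\alpha\mu\neq0$, this forces $T\equiv0$. (When $\alpha=0$ the trace-free relation already delivers the conclusion of the next step directly.) I expect this sign-definite integral identity to be the main obstacle: the delicate points are the vanishing of $\dv T$ under constancy of $S$ and the fact that the proportionality in \eqref{EqMain} turns the integrand into a fixed-sign multiple of $u\,|T|^2$.

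With $T\equiv0$ the relation collapses to $\nabla^2u=\frac{\Delta u}{n}\,g$, so $u$ is a concircular field on $M$; it is nonconstant because the structure is nontrivial, that is, $f$ is nonconstant. This is precisely the conclusion of Lemma~\ref{lemma1}. Invoking Tashiro's Theorem~2 in \cite{Tashiro} to classify complete manifolds carrying a nonconstant concircular function, the compactness of $M$ singles out the round sphere, so $(M^n,g)$ is isometric to $\mathbb{S}^n(r)$.

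Finally I would pin down the potential. On the unit sphere $\mathbb{S}^n$ every height function $h_v$ satisfies $\nabla^2h_v=-h_v\,g$, and the nonconstant solutions of $\nabla^2u=\psi\,g$ are, up to rescaling and an additive constant, of the form $u=\tau-\frac{h_v}{n}$; indeed such $u$ obey $\nabla^2u=(\tau-u)\,g$, matching $\nabla^2u=\frac{\Delta u}{n}g$ with $\Delta u=n(\tau-u)$. Since $h_v$ ranges over $[-1,1]$, the positivity $u=e^{\frac{\mu}{\beta}f}>0$ holds if and only if $\tau-\frac1n>0$, that is $\tau\in(\frac1n,+\infty)$. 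Inverting the substitution then yields $f=\frac{\beta}{\mu}\ln\big(\tau-\frac{h_v}{n}\big)$, as claimed.
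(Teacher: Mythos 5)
Your main argument, covering the case $\alpha\neq0$, is correct and is essentially the paper's own proof. The pointwise identity you derive is the paper's Eq.~\eqref{1-3} (not \eqref{EqMain}, which in the paper denotes the divergence formula), and your two evaluations of $\int_M\langle \mathring{Ric},\nabla^2u\rangle\,dvol_g$ --- integration by parts against $\dv(\mathring{Ric})=0$ on one side, pairing with the trace-free part of the Hessian on the other --- amount exactly to integrating the paper's identity \eqref{EqMain} over the compact manifold with $S$ constant. Identifying the sphere via Lemma~\ref{lemma1} and Tashiro's Theorem~2, rather than Obata's theorem as the paper does, is a harmless variant: Lemma~\ref{lemma1} (whose proof is independent of Theorem~\ref{thmA}, so there is no circularity) supplies precisely the special concircular equation $\nabla^2u=(-ku+b)g$ with \emph{constant} coefficients that Tashiro's Theorem~2 requires, and compactness then forces the round sphere.

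There is, however, a genuine gap in the case $\alpha=0$, which the hypotheses of the theorem allow and which you dispose of in a parenthesis. When $\alpha=0$ you cannot conclude that $g$ is Einstein --- the structure equation carries no information about $Ric$ --- so Lemma~\ref{lemma1} does not apply, and all you obtain is the concircular equation $\nabla^2u=\frac{\Delta u}{n}\,g$, where $\frac{\Delta u}{n}=\frac{\mu}{\beta^2}u(\rho S+\lambda)$ involves the unknown function $\lambda$ and need not have the form $-ku+b$ with constants. Your appeal to Tashiro's Theorem~2 as classifying ``complete manifolds carrying a nonconstant concircular function'' is a misquotation: that theorem concerns \emph{special} concircular fields, and compactness together with mere concircularity does not single out the round sphere --- every smooth rotationally symmetric metric $dt^2+\varphi(t)^2g_{\Bbb{S}^{n-1}}$ on $\Bbb{S}^n$ carries the nonconstant concircular function $u=\int\varphi\,dt$, however far from round it may be. What closes this case is the constancy of the scalar curvature: a compact Riemannian manifold of constant scalar curvature admitting a nontrivial gradient conformal vector field is isometric to a standard sphere (Obata~\cite{obata}; see also~\cite[p.~54]{yano}), and this is exactly how the paper finishes the proof when $\alpha=0$. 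The same conflation of ``concircular'' with ``special concircular'' is present in your main case as well, but there it is rendered harmless by your invocation of Lemma~\ref{lemma1}.
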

\begin{proof}
Begin by considering the smooth function $u=e^{\frac{\mu}{\beta}f}$ for which are valid
\begin{equation*}
du=\frac{\mu}{\beta}udf \quad\mbox{and}\quad \nabla^2u = \frac{\mu}{\beta}u(\nabla^2f+\frac{\mu}{\beta}df\otimes df).
\end{equation*}
Thus, Eq.~\eqref{1-2} is equivalent to
\begin{equation}\label{1-3}
\frac{\alpha}{\beta}Ric + \frac{\beta}{\mu u}\nabla^2 u=\tilde\lambda g,
\end{equation}
where $\tilde\lambda=\frac{1}{\beta}(\rho S+\lambda)$.

Now, we can proceed as in~\cite{bjng} in order to prove the identity
\begin{equation}\label{EqMain}
{\rm div}\big(\mathring{Ric}(\nabla u)\big)=\frac{n-2}{2n}\mathscr{L}_{\nabla u}S-\frac{\alpha\mu}{\beta^2}u\|\mathring{Ric}\|^2,
\end{equation}
which is true for any gradient Einstein-type manifold with both $\beta$ and $\mu$ nonzero, where $\mathring{Ric}=Ric-\frac{S}{n}g$. Indeed, from the definition of the divergence we have
\begin{equation}\label{eqm1}
{\rm div}\big(\mathring{Ric}(\nabla u)\big)={\rm div}(\mathring{Ric})(\nabla u) + \langle \nabla^2 u,\mathring{Ric}\rangle.
\end{equation}
From the second contracted Bianchi identity we get
\begin{equation}\label{eqm2}
{\rm div}(\mathring{Ric})(\nabla u)= \frac{n-2}{2n}\langle \nabla S,\nabla u\rangle.
\end{equation}
Since $\langle g,\mathring{Ric}\rangle=0$, we use \eqref{1-3} to compute
\begin{equation}\label{eqm3}
\langle \nabla^2 u,\mathring{Ric}\rangle = \frac{\mu}{\beta}u\langle\tilde\lambda g-\frac{\alpha}{\beta}Ric,\mathring{Ric}\rangle=-\frac{\alpha\mu}{\beta^2}u\|\mathring{Ric}\|^2.
\end{equation}
Inserting \eqref{eqm2} and \eqref{eqm3} into \eqref{eqm1} we immediately deduce Eq.~\eqref{EqMain}.

We now assume that $S$ is constant and $M^n$ is compact. Moreover, without loss of generality, we also assume $\alpha\neq0$. So, it follows from Eq.~\eqref{EqMain} that $(M^n,g)$ is an Einstein manifold. This implies that $\nabla u$ is a nontrivial gradient conformal vector field, see Eq.~\eqref{1-3}. Applying the Obata's theorem~\cite{obata} we conclude that $M^n$ is isometric to the standard sphere $\Bbb{S}^{n}(r)$, and we can write
\begin{equation*}
\mathscr{L}_{\nabla u}g=2\psi g,
\end{equation*}
where the conformal factor $\psi=\frac{\Delta u}{n}$ satisfies the equation $\Delta \psi+\frac{S}{n-1}\psi=0$, see e.g. Ishihara-Tashiro~\cite{Ishihara-Tashiro}, Yano~\cite{yano2} or \cite[p.~28]{yano}. Rescaling the metric we can assume that $S=n(n-1)$. So, $\Delta u$ is the first eigenvalue of the unitary sphere $\Bbb{S}^{n}.$ Whence, there is a fixed vector $v\in \Bbb{S}^{n}$ such that $\Delta u =h_{v}=-\frac{1}{n} \Delta h_{v}$ (see~\cite{Berger}), which gives $u=\tau-\frac{1}{n} h_{v}$. When $\alpha=0$, we can use our latter argument, because it is immediate from Eq.~\eqref{1-3} that $\nabla u$ is a nontrivial gradient conformal vector field on a compact Riemannian manifold $(M^n,g)$ of constant scalar curvature. Note that this is sufficient to complete our proof.
\end{proof}

We identify Eq.~\eqref{EqMain} as the main formula for analyzing the noncompact case. Moreover, the equivalence between equations \eqref{1-2} and \eqref{1-3} as well as the obtainment of the function $u$ in the previous proof give us free of charge the next examples.

\begin{example}\label{ExM(c)}
Let $(\Bbb{M}^n(c),g_o)$ be the standard sphere $\Bbb{S}^n$ or the hyperbolic space $\Bbb{H}^n$ for $c=1$ or $c=-1$, respectively. We denote by $h_v$ the height function with respect to a fixed unit vector $v\in\Bbb{R}^{n+1}$. For each real numbers $\alpha,\rho$ and nonzero real numbers $\beta,\mu$, the functions $\lambda=c[-\rho n(n-1) +\alpha(n-1) + \frac{\beta^2}{\mu}\frac{(\tau-u)}{u}]$ and $f=\frac{\beta}{\mu} \ln(u)$ parameterize $(\Bbb{M}^n(c),g_{\circ})$ with a nontrivial gradient Einstein-type structure, where $u=\tau-\frac{c}{n}h_v$ and $\tau\in(\frac{c}{n},+\infty)$. Indeed, it is enough to recall that $Ric=c(n-1)g_o$ and $\nabla^2h_v=-ch_vg_o$, and thus conclude our statement by straightforward computation from Eq.~\eqref{1-3}.
\end{example}

\begin{example}\label{ExM(0)}
Let $(\Bbb{R}^n,g_o)$ be the Euclidean space. For each real numbers $\alpha,\rho$ and nonzero real numbers $\beta,\mu$, the functions $f=\frac{\beta}{\mu} \ln(u)$ and $\lambda=2\frac{\beta^2}{\mu u}$ parameterize $(\Bbb{R}^n,g_{\circ})$ with a nontrivial gradient Einstein-type structure, where $u=\|x\|^2+\tau$ and $\tau$ is a positive constant. Indeed, it is enough to recall that $\nabla^2\|x\|^2=2g_o$, and thus we again conclude our statement by straightforward computation from Eq.~\eqref{1-3}.
\end{example}

We point out that the previous examples and Lemma~\ref{lemma1} below are motivated by the corresponding results for generalized $m$--quasi Einstein manifolds proven in Barros-Ribeiro~\cite{BR}.

\section{Noncompact case}
Our purpose here is, under some geometric assumptions, to establish a rigidity result on a noncompact gradiente Einstein-type manifolds. The first step is to prove the following lemma.
\begin{lemma}\label{lemma1}
Let $(M^n,g)$ be a nontrivial gradient Einstein-type manifold with potential function $f$, and both $\beta$ and $\mu$ nonzero. If $(M^n,g)$ is an Einstein manifold, then the positive smooth function $u=e^{\frac{\mu}{\beta}f}$ provides a special concircular field on $(M^n,g)$. More precisely, the function $u$ satisfies the equation
\begin{equation}\label{EqConCirc}
\nabla^2 u = (-ku+b)g,
\end{equation}
with constant coefficients $k=\frac{S}{n(n-1)}$ and $b=\frac{\mu}{\beta}c$.
\end{lemma}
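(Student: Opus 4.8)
The plan is to turn the Einstein-type equation into a genuine concircular equation for $u$ and then to use a curvature argument to force the conformal factor to be affine in $u$ with the prescribed slope. First I would record the consequences of the Einstein hypothesis: since $n\geq 3$, Schur's lemma gives that $S$ is constant and $Ric=\frac{S}{n}g$. Substituting this into the equivalent equation~\eqref{1-3} eliminates the Ricci term and yields
\[
\nabla^2 u=\varphi\,g,\qquad \varphi:=\frac{\mu u}{\beta}\Big(\tilde\lambda-\frac{\alpha S}{n\beta}\Big),
\]
so that $u$ is automatically a concircular field. The real content of the lemma is that this $\varphi$ is not arbitrary, but affine in $u$, which is what upgrades ``concircular'' to ``special concircular'' in the sense of Tashiro.

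To see this I would argue purely from the endomorphism form $\nabla_X\nabla u=\varphi X$ of the Hessian identity. A direct computation of the curvature on the gradient field gives
\[
R(X,Y)\nabla u=(X\varphi)\,Y-(Y\varphi)\,X,
\]
and tracing over $X$ produces $Ric(\nabla u)=-(n-1)\nabla\varphi$. Feeding in the Einstein condition $Ric(\nabla u)=\frac{S}{n}\nabla u$ turns this into $\nabla\varphi=-\frac{S}{n(n-1)}\nabla u$. Because $S$ is constant, I can integrate to obtain $\varphi=-ku+b$ with $k=\frac{S}{n(n-1)}$ and $b$ a constant of integration, which is exactly~\eqref{EqConCirc}; I would then write the integration constant in the normalized form $b=\frac{\mu}{\beta}c$, which is the natural presentation once the equation is read back through $du=\frac{\mu}{\beta}u\,df$, and which matches the structure of Example~\ref{ExM(c)}.

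The individual steps are short, so the point that needs the most care is the sign in the commutation formula: the computation must deliver $-k$ rather than $+k$, and I would sanity-check it against the model $\nabla^2 h_v=-c\,h_v g_\circ$ on $\Bbb{M}^n(c)$ of Example~\ref{ExM(c)}, where $k=c$ and the sign is indeed negative. The only hypothesis that needs emphasis is the constancy of $S$: without Schur's lemma one obtains only $\nabla\varphi=-\frac{S}{n(n-1)}\nabla u$ with $S$ a priori varying, and the integration to an affine function in $u$ would break down; invoking $n\geq3$ removes this obstacle. Positivity and smoothness of $u$ are free, since $u=e^{\frac{\mu}{\beta}f}$ and $\mu,\beta\neq0$.
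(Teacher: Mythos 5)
Your proposal is correct and follows essentially the same route as the paper: both reduce Eq.~\eqref{1-3} under the Einstein hypothesis to a concircular equation $\nabla^2u=\varphi g$ and then show $(n-1)\nabla\varphi=-\frac{S}{n}\nabla u$, which integrates to $\varphi=-ku+b$ once $S$ is constant. The only cosmetic difference is that you obtain $Ric(\nabla u)=-(n-1)\nabla\varphi$ by tracing the curvature commutation formula $R(X,Y)\nabla u=(X\varphi)Y-(Y\varphi)X$, whereas the paper uses the equivalent contracted identity ${\rm div}\,\nabla^2u=Ric(\nabla u)+\nabla\Delta u$ together with ${\rm div}(\varphi g)=\nabla\varphi$; your explicit appeal to Schur's lemma just makes visible what the paper uses implicitly.
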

\begin{proof}
If $(M^n,g)$ is an Einstein manifold, i.e., $Ric=\frac{S}{n}g$, then from \eqref{1-3} we have
\begin{equation}\label{1-4}
\nabla^2 u = \frac{\mu}{\beta}\left(\tilde\lambda u -\frac{\alpha}{\beta}\frac{S}{n}u\right) g \quad\mbox{and}\quad \Delta u = \frac{\mu}{\beta}\left(\tilde\lambda u -\frac{\alpha}{\beta}\frac{S}{n}u\right)n.
\end{equation}

Two well-known general facts in the literature are
\begin{equation*}
Ric(\nabla \psi)+\nabla\Delta \psi = {\rm div}\nabla^2\psi \quad\mbox{and} \quad {\rm div}(\varphi g)=\nabla\varphi
\end{equation*}
for all $\psi,\varphi\in C^\infty(M)$.

Applying these facts for $\psi=u$ and $\varphi= \frac{\mu}{\beta}\left(\tilde\lambda u -\frac{\alpha}{\beta}\frac{S}{n}u\right)$ together with the identities of \eqref{1-4} we obtain
\begin{equation*}
\frac{S}{n}\nabla u+\frac{\mu}{\beta}\nabla\left(\tilde\lambda n u -\frac{\alpha}{\beta}Su\right) = \frac{\mu}{\beta}\nabla \left(\tilde\lambda u -\frac{\alpha}{\beta}\frac{S}{n}u\right).
\end{equation*}
Hence,
\begin{equation*}
\nabla\left(\frac{\beta^2S-\mu\alpha(n-1)S}{n(n-1)\beta^2}u+\frac{\mu}{\beta}\tilde\lambda u\right)= 0.
\end{equation*}
By connectedness of $M^n$ there is a constant $c$ such that
\begin{equation}\label{1-5}
\frac{\mu}{\beta}\tilde\lambda u = -\frac{\beta^2S-\mu\alpha(n-1)S}{n(n-1)\beta^2}u + c.
\end{equation}
Replacing \eqref{1-5} in the first identity of \eqref{1-4}, making $k=\frac{S}{n(n-1)}$ and $b=\frac{\mu}{\beta}c$, it is obtained the required Eq.~\eqref{EqConCirc}.
\end{proof}

The second step is to use the following well-known general extension of the Stoke's theorem in the noncompact case, namely:

\begin{lemma}[Karp's theorem~\cite{Karp}]
Let $(M^n,g)$ be a complete noncompact Riemannian manifold. Consider the geodesic ball $B(r)$ of radius $r$ centered at some fixed $p\in M^n$ and a vector field $X$ such that
\begin{equation*}
\liminf_{r\to\infty}\frac{1}{r}\int_{B(2r)\backslash B(r)}\|X\| dvol_g=0.
\end{equation*}
If ${\rm div} X$ has an integral (i.e., if either (${\rm div} X)^+$ or $({\rm div} X)^-$ is integrable), then
$\int_M {\rm div} X dvol_g = 0.$ In particular, if ${\rm div} X$ does not change sign outside some compact set, then $\int_M {\rm div} X dvol_g = 0.$
\end{lemma}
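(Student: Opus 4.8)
The plan is to reduce the statement to the classical divergence theorem for compactly supported vector fields by means of a family of radial Lipschitz cutoffs. First I would fix the base point $p$ and, for each $r>0$, construct $\phi_r=\eta_r\circ d(p,\cdot)$, where $\eta_r$ equals $1$ on $[0,r]$, interpolates linearly from $1$ to $0$ on $[r,2r]$, and vanishes on $[2r,\infty)$. Then $0\le\phi_r\le1$, $\phi_r\equiv1$ on $B(r)$, $\phi_r\equiv0$ outside $B(2r)$, and, since the distance function is $1$--Lipschitz, $\|\nabla\phi_r\|\le 1/r$ almost everywhere on the annulus $B(2r)\setminus B(r)$. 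A direct check shows that $r\mapsto\phi_r$ is monotone increasing to $1$, a fact I will use below. As $\phi_rX$ is Lipschitz with compact support on the complete manifold $M^n$, the ordinary divergence theorem (after a routine smoothing of $\phi_r$ if one prefers the $C^1$ version) yields $\int_M{\rm div}(\phi_rX)\,dvol_g=0$.

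Next I would expand ${\rm div}(\phi_rX)=\phi_r\,{\rm div}X+\langle\nabla\phi_r,X\rangle$, which gives
\[
\int_M\phi_r\,{\rm div}X\,dvol_g=-\int_{B(2r)\setminus B(r)}\langle\nabla\phi_r,X\rangle\,dvol_g.
\]
Estimating the right--hand side by the Cauchy--Schwarz inequality together with the gradient bound produces
\[
\Big|\int_M\phi_r\,{\rm div}X\,dvol_g\Big|\le\frac1r\int_{B(2r)\setminus B(r)}\|X\|\,dvol_g.
\]
By the growth hypothesis the $\liminf$ of the right side as $r\to\infty$ is zero, so there is a sequence $r_j\to\infty$ along which $\int_M\phi_{r_j}\,{\rm div}X\,dvol_g\to0$.

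Finally I would pass to the limit on the left. Writing ${\rm div}X=({\rm div}X)^+-({\rm div}X)^-$ and applying the monotone convergence theorem (the $\phi_{r_j}$ increase to $1$ and the parts are nonnegative), each of $\int_M\phi_{r_j}({\rm div}X)^{\pm}\,dvol_g$ converges to $\int_M({\rm div}X)^{\pm}\,dvol_g\in[0,+\infty]$. The assumption that ${\rm div}X$ has an integral means at least one of these two limits is finite; the vanishing of their difference along $r_j$ then forces the other to be finite and equal to it, whence both parts are integrable and $\int_M{\rm div}X\,dvol_g=0$. The ``in particular'' clause is immediate: if ${\rm div}X$ has constant sign outside a compact set $K$, then one of $({\rm div}X)^{\pm}$ is supported in $K$ and hence integrable (being continuous on a compact set), so the one--sided integrability hypothesis holds and the main conclusion applies.

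The main obstacle will be exactly this last step, since ${\rm div}X$ need not be absolutely integrable and so dominated convergence is unavailable. The delicate point is to combine the monotonicity of the cutoffs with the ``has an integral'' hypothesis to rule out the indeterminate $\infty-\infty$ behaviour: knowing that the difference of the two monotone limits tends to zero is what upgrades one--sided finiteness to genuine integrability of both parts and pins the total integral at $0$.
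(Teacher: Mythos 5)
Your proposal is correct, but there is nothing in the paper to compare it against: the paper does not prove this statement at all. It is imported verbatim as Karp's theorem, with the proof delegated entirely to the citation \cite{Karp}; the lemma is used in the paper only as a black box (to integrate ${\rm div}\big(\mathring{Ric}(\nabla u)\big)$ in Theorem~\ref{thmNoncompact}). Your argument is essentially the standard proof of Karp's result: linear radial cutoffs $\phi_r$ with $\|\nabla\phi_r\|\le 1/r$ supported on the annulus, integration by parts against $\phi_r X$, the growth hypothesis to kill the boundary term along a sequence $r_j\to\infty$, and then monotone convergence applied separately to $({\rm div}X)^{+}$ and $({\rm div}X)^{-}$ to resolve the potential $\infty-\infty$ ambiguity. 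That last step is indeed the crux, and you handle it correctly: one-sided integrability plus the vanishing of the difference $\int\phi_{r_j}({\rm div}X)^{+}-\int\phi_{r_j}({\rm div}X)^{-}$ forces both monotone limits to be finite and equal, which is exactly what pins $\int_M{\rm div}X\,dvol_g=0$. Two small housekeeping points if you write this up: pass to an increasing subsequence $r_j$ so that the pointwise monotonicity $\phi_{r_j}\nearrow 1$ needed for monotone convergence is literal, and note that the integration-by-parts identity $\int_M\phi_r\,{\rm div}X=-\int_M\langle\nabla\phi_r,X\rangle$ for a Lipschitz compactly supported $\phi_r$ (built from the distance function, which is only Lipschitz through the cut locus) is justified by a routine smoothing, as you indicate.
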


Since $\beta\neq0$, for the next two theorems we also consider both $\mu$ and $\alpha$ nonzero, as the $\mu=\alpha=0$ case has already been covered in Tashiro~\cite{Tashiro}. Moreover, Theorem~1.4 and Corollary~1.5 in~\cite{CMMR} are local results that provide a very precise description of a gradient Einstein-type manifold with $\alpha=0$. In the case of $\mu=0$ and $\alpha\neq0$ we refer to Calvi\~no-Louzao et al.~\cite{EMER} from which we know that a locally homogeneous proper Ricci almost soliton is either of constant sectional curvature or a product $\Bbb{R}\times N(c)$, where $N(c)$ is a space of constant curvature $c$.

\begin{theorem}\label{thmNoncompact}
Let $(M^n,g)$ be a nontrivial, noncompact, gradient Einstein-type manifold of constant scalar curvature with $\beta$, $\mu$ and $\alpha$ nonzero. Consider the geodesic ball $B(r)$ of radius $r$ centered at some fixed $p\in M^n$. In addition, suppose that at least one of the following conditions holds:
\begin{enumerate}
\item \label{thm2-1}$\displaystyle\liminf_{r\to\infty}\frac{1}{r}\int_{B(2r)\backslash B(r)}\|\mathring{Ric}(\nabla u)\|dvol_g=0.$
\item \label{thm2-2}$\displaystyle\liminf_{r\to\infty}\frac{1}{r}\int_{B(2r)\backslash B(r)}\|\nabla u\|dvol_g=0$ and Ricci curvature is upper bounded.
\item \label{thm2-3}There is $C>0$ such that $vol(B(r))\leq Cr^q$ for $r\geq1$ and $\mathring{Ric}(\nabla u)$ lies in $L^p(M^n, dvol_g)$ where $1/p+1/q=1$ and $q>1$.
\item \label{thm2-4}There is $C>0$ such that $vol(B(r))\leq Cr$ for $r\geq1$ and $\|X\|\to0$ uniformly at infinity in $M^n$.
\end{enumerate}
Then, $(M^n,g)$ is an Einstein manifold of nonpositive scalar curvature $S$ and $u$ has at most one stationary point. More precisely:
\begin{enumerate}
\item[(i)] If $S=0$, then $\lambda$ has no zeros and $(M^n,g)$ is isometric to a Euclidean space.
\item[(ii)] If $S<0$ and $u$ has only one stationary point, then $(M^n,g)$ is isometric to a hyperbolic space.
\item[(iii)] If $S<0$ and $u$ has no stationary point, then $(M^n,g)$ is isometric to a warped product $\Bbb{R}\times_{\varphi}\Bbb{F}$, where $\Bbb{F}$ is a complete Einstein manifold, and $\varphi$ is a solution of the differential equation $\ddot{\varphi}+\frac{S}{n(n-1)}\varphi=0$ on the real line $\Bbb{R}$.
\end{enumerate}
\end{theorem}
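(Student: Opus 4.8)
The plan is to push Eq.~\eqref{EqMain} through Karp's theorem to force $(M^n,g)$ to be Einstein, and then to read off the geometry from the special concircular field supplied by Lemma~\ref{lemma1} via Tashiro's classification. Since $S$ is constant, the term $\mathscr{L}_{\nabla u}S$ in Eq.~\eqref{EqMain} vanishes, leaving
\begin{equation*}
{\rm div}\big(\mathring{Ric}(\nabla u)\big)=-\frac{\alpha\mu}{\beta^2}u\|\mathring{Ric}\|^2.
\end{equation*}
Because $u=e^{\frac{\mu}{\beta}f}>0$ and $\|\mathring{Ric}\|^2\geq0$, the right-hand side has a fixed sign (that of $-\alpha\mu$), so the divergence never changes sign and in particular has an integral. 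I would then check, under each hypothesis \eqref{thm2-1}--\eqref{thm2-4}, that $Y:=\mathring{Ric}(\nabla u)$ satisfies the growth requirement of Karp's theorem: \eqref{thm2-1} is exactly that requirement; for \eqref{thm2-2} I bound $\|Y\|\leq\|\mathring{Ric}\|\,\|\nabla u\|$ and observe that an upper Ricci bound, together with $\tr\mathring{Ric}=0$ and constant $S$, makes $\|\mathring{Ric}\|$ bounded; for \eqref{thm2-3} I apply H\"older's inequality on the annulus $B(2r)\setminus B(r)$ against the polynomial volume growth; and for \eqref{thm2-4} I combine the uniform decay at infinity with the linear volume bound. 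In every case Karp's theorem gives $\int_M{\rm div}(\mathring{Ric}(\nabla u))\,dvol_g=0$, and sign-definiteness then forces $u\|\mathring{Ric}\|^2\equiv0$, i.e. $\mathring{Ric}\equiv0$, so $(M^n,g)$ is Einstein.

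Being Einstein and noncompact forces $S\leq0$: a positive $S$ would give $Ric=\frac{S}{n}g$ with constant $\frac{S}{n}>0$, and Bonnet--Myers would render $M^n$ compact. Next I invoke Lemma~\ref{lemma1}: the Einstein condition makes $u$ a nonconstant special concircular field, $\nabla^2u=(-ku+b)g$ with $k=\frac{S}{n(n-1)}\leq0$. Applying Tashiro's classification (Theorem~2 in \cite{Tashiro}) to $u$, the outcome is governed by the number of stationary points of $u$: two stationary points would force $M^n$ to be a compact round sphere, which is excluded, so $u$ has at most one stationary point. When $u$ has exactly one stationary point, Tashiro's theorem yields a complete rotationally symmetric space form of curvature $k$, namely Euclidean space if $k=0$ and hyperbolic space if $k<0$, which is case (ii); when $u$ has no stationary point, $M^n$ splits as a warped product $\Bbb{R}\times_\varphi\Bbb{F}$.

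For the warped product I would substitute $u=u(t)$ into $\nabla^2u=(-ku+b)g$: the normal and fiber components give $u''=-ku+b$ and $u'\frac{\varphi'}{\varphi}=-ku+b$, whence $u'=C\varphi$, and differentiating $u''=-ku+b$ yields $\ddot\varphi+\frac{S}{n(n-1)}\varphi=0$; that $\Bbb{F}$ is complete and Einstein then follows from the Einstein condition on $M^n$ through the standard warped-product curvature identities, giving case (iii). To settle case (i) I must exclude the no-stationary-point situation when $S=0$: there $\nabla^2u=bg$, and if $b=0$ then $\nabla u$ is parallel and nowhere zero, so along its integral geodesic the positive function $u$ would grow linearly and become negative, a contradiction; hence $b\neq0$. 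Combined with Eq.~\eqref{1-4}, which gives $\lambda=\frac{\beta^2 b}{\mu u}$, this shows $\lambda$ has no zeros, forces a single stationary point, and delivers the Euclidean space.

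I expect the main obstacle to be the bookkeeping in the Karp step, namely verifying that each of the four analytic hypotheses genuinely places $Y=\mathring{Ric}(\nabla u)$ within the scope of Karp's theorem (in particular the H\"older estimate in \eqref{thm2-3} and the correct reading of the decay condition in \eqref{thm2-4}), since everything afterward is driven cleanly by Lemma~\ref{lemma1} and Tashiro's rigidity.
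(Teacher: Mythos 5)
Your proposal is correct and follows essentially the same route as the paper: Eq.~\eqref{EqMain} plus Karp's theorem (with hypotheses \eqref{thm2-2}--\eqref{thm2-4} reduced to \eqref{thm2-1}) forces $\mathring{Ric}\equiv0$, Bonnet--Myers gives $S\leq0$, and Lemma~\ref{lemma1} together with Tashiro's classification of special concircular fields yields cases (i)--(iii). The only real deviations are harmless: in case (i) you replace the paper's appeal to Yau's theorem on positive harmonic functions over the Ricci-flat manifold by an elementary parallel-gradient argument to rule out $b=0$ (both work), and your formula $\lambda=\frac{\beta^2 b}{\mu u}$ is off by a factor $\frac{\beta}{\mu}$ from the correct $\lambda=\frac{\beta^2 c}{\mu u}$ with $c=\frac{\beta}{\mu}b$, which does not affect the conclusion that $\lambda$ has no zeros.
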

\begin{proof}
Instead of using the constancy assumption on $S$, we shall prove the theorem under the weaker condition that $\langle\nabla u,\nabla S\rangle\leq0$ on $M^n$, if $\alpha\mu>0$, or $\langle\nabla u,\nabla S\rangle\geq0$ on $M^n$, if $\alpha\mu<0$. Anyway, ${\rm div}(\mathring{Ric}(\nabla u))$ given by Eq.~\eqref{EqMain} does not change sign on $M^n$.

\vspace{0.1cm}
\noindent \textbf{Item~\eqref{thm2-1}:} By integrating of Eq.~\eqref{EqMain} on $M^n$, we deduce immediately from Karp's theorem that $M^n$ is an Einstein manifold. Thus, $S$ must be constant, and as $M^n$ is a complete noncompact, we must have $S\leq0$ by Bonnet-Myers theorem.

\vspace{0.1cm}
\noindent \textbf{Item~\eqref{thm2-2}:} In this case, note that there is a constant $C>0$ such that $Ric\leq C$ and
\begin{equation*}
\|\mathring{Ric}(\nabla u)\|^{2}\leq\|\mathring{Ric}\|^{2}\|\nabla u\|^{2}=\left(\|Ric\|^{2}-\dfrac{S^{2}}{n}\right)\|\nabla u\|^{2}\leq C\|\nabla u\|^{2}.
\end{equation*}
Hence, Item~\eqref{thm2-2} implies Item~\eqref{thm2-1}.

\vspace{0.1cm}
\noindent \textbf{Items~\eqref{thm2-3} and \eqref{thm2-4}:} Both are also a consequence of Item~\eqref{thm2-1}, see~\cite[Corollary~1]{Karp}.

Now, as a consequence of Lemma~\ref{lemma1}, in all cases the function $u$ satisfies Eq.~\eqref{EqConCirc} from which we apply Theorem~2 in~\cite{Tashiro} to deduce that $M^n$ is as in {\rm (i)-(iii)}. More precisely, we have
\begin{equation}\label{EqConCirc-thm2}
\nabla^2u=\left(-\frac{S}{n(n-1)}u+\frac{\mu}{\beta}c\right)g,
\end{equation}
were $c$ is a constant given by Eq.~\eqref{1-5}.

\vspace{0.1cm}
\noindent\textbf{(a)} $S=0$ case: Here, the proof depends on the nullity and not of the function $\lambda$. From \eqref{1-5} we get $c=\frac{\mu}{\beta^2}\lambda u$. Thus, if $c=0$, then $\lambda\equiv0$. Conversely, if there is $p\in M$ such that $\lambda(p)=0$, then $c=0$ and $\lambda\equiv0$. And, if $\lambda$ has no zeros, we must have $c\neq0$.

\vspace{0.1cm}
\noindent \textbf{Item~{\rm(i)}:} Suppose that there is $p\in M^n$ such that $\lambda(p)=0$. Then, we have $\lambda\equiv0$ and $\nabla^2u=0$. But, since $M^n$ is Ricci flat it is not possible to find a positive nonconstant harmonic function $u$ on $M^n$, see Yau~\cite{Yau}. Consequently, $\lambda$ has no zeros. So, we have $c\neq0$ and $\nabla^2u=\frac{\mu}{\beta}cg$. Hence, $(M^n,g)$ is isometric to a Euclidean space. Since $u$ must be positive, we can take, e.g., $u(x)=\|x\|^2+\tau$, with $\tau>0$. In this case, the Einstein-type structure on $M^n$ is given by Example {\rm\ref{ExM(0)}}.

\vspace{0.1cm}
\noindent\textbf{(b)} $S<0$ case: Here, the proof depends on the number of stationary points of the concircular scalar field $u$ which must be at most one, see~\cite[Theorem~1]{Tashiro}.

\vspace{0.1cm}
\noindent\textbf{Item~{\rm(ii)}:} If $u$ has only one stationary point, then $(M^n,g)$ is isometric to a hyperbolic space. Now let us suppose that $(M^n,g)$ is isometric to $(\Bbb{H}^n(-1),g_o)$, and we can choose any point $p$ in $\Bbb{H}^n(-1)$. It is well-known that the function $\tilde{u}(q):=\cosh(r)$ satisfies $\nabla^2\tilde{u}=\coth(r)g_o$ on $\Bbb{H}^n(-1)$, where $r=dist(p,q)$. Then the conformal factor gives rise to a nontrivial solution of Eq.~\eqref{EqConCirc-thm2} (see e.g.~\cite[p.~28]{yano}), namely $u(q)=\coth(r)$, being $p$ the stationary point. Another positive nontrivial solution of Eq.~\eqref{EqConCirc-thm2} on $\Bbb{H}^n(-1)$ has been presented in Example~{\rm\ref{ExM(c)}}.

\vspace{0.1cm}
\noindent\textbf{Item~{\rm(iii)}:} If $u$ has no stationary point, then $(M^n,g)$ is isometric to a warped product $\Bbb{R}\times_{\varphi}\Bbb{F}$, where $\Bbb{F}$ is a complete Einstein manifold and $\varphi$ is a positive solution of $\ddot{\varphi}+k\varphi=0$ on $\Bbb{R}$ with $k=\frac{S}{n(n-1)}$ (see also Masahiko~\cite[Theorem G]{masahiko}). The value of $k$ and the fact of $\Bbb{F}$ to be Einstein, it follows from Bishop-O'Neill's formulas~\cite{BO} or O'Neill~\cite[Corollary~43]{oneill}. Finally, we mention that a nontrivial Einstein-type structure on $\Bbb{R}\times_{\varphi}\Bbb{F}$ also occur, see Example~\ref{ExWP}. This completes the proof of theorem.
\end{proof}

\begin{example}\label{ExWP}
We consider a positive solution $\varphi: \Bbb{R}\to\Bbb{R}^+$ of the differential equation $\ddot{\xi}+k\xi=0$, given by
\begin{equation*}
\varphi(t)=\frac{a}{\sqrt{-k}}\sinh(\sqrt{-k}\,t)+\sqrt{\frac {a^2+l}{-k}}\cosh(\sqrt{-k}\,t),
\end{equation*}
where $a\neq0$, $l>0$ and $k<0$ are constants. Let $(\Bbb{F},g_{\Bbb{F}})$ be an $(n-1)$--dimensional complete Einstein manifold with $Ric_{g_{\Bbb{F}}}= -(n-2)lg_{\Bbb{F}}$, $n\geq3$. From Bishop-O'Neill's formulas or, alternatively, from Lemma~2.1 in~\cite{prrs}, we can construct an Einstein warped product $M^n=\Bbb{R}\times_\varphi\Bbb{F}$ endowed with the metric $g=dt^2 + \varphi(t)^2g_{\Bbb{F}}$ and Ricci tensor $Ric_g=(n-1)kg$, where $dt^2$ stands for the canonical metric of $\Bbb{R}$. Moreover, $u(t,p)=\varphi(t)$ is a positive function without critical points satisfying the differential equation $\nabla^2u+kug=0$ on $M^n$. By using Eq.~\eqref{1-3} we obtain
\begin{equation*}
\lambda=-\rho n(n-1)k + [\mu\alpha(n-1)-\beta^2]k/\mu.
\end{equation*}
Thus, the function $f=\frac{\beta}{\mu} \ln(u)$ and the constant $\lambda$ parameterize $M^n$ with a nontrivial gradient Einstein-type structure, for each real numbers $\rho,\alpha$ and nonzero real numbers $\beta,\mu$.
\end{example}

Now, we will study some conditions for a noncompact gradient Einstein-type manifold (with $\beta\neq0$) to be an Einstein manifold. Obviously, some restrictions on the parameters $\mu$ and $\alpha$ must be established. The one that seems more natural is to consider both $\mu$ and $\alpha$ nonzero. In this case, Eq.~\eqref{1-2} is equivalent to
\begin{equation}\label{Eq-hARS}
Ric + h\nabla^2 u=\ell g,
\end{equation}
where $u=e^{\frac{\mu}{\beta}f}$, $h=\frac{\beta^2}{\mu\alpha}\frac{1}{u}$ and $\ell=\frac{1}{\alpha}(\rho S+\lambda)$. Now, we can apply the approach of the generalized $m$--quasi Einstein metrics.

In what follows we assume that $h,\ell$ and $u$ are arbitrary smooth functions satisfying Eq.~\eqref{Eq-hARS} on a Riemannian manifold $(M^n,g)$. Then,
\begin{eqnarray*}
\frac{1}{2} dS &=&\dv Ric \,=\, d\ell - h \dv\nabla^2u -\nabla^2u(\nabla h,\cdot)\\
&=&d\ell -hRic(\nabla u,\cdot)-hd\Delta u -\nabla^2u(\nabla h,\cdot).
\end{eqnarray*}
Since $d(h\Delta u)=hd\Delta u+\Delta u dh$ and $h\Delta u=n\ell-S$, we immediately obtain
\begin{equation}\label{Eq-hARS-2}
(n-1)d\ell=\frac{1}{2}dS - hRic(\nabla u,\cdot) + \Delta u dh-\nabla^2u(\nabla h,\cdot).
\end{equation}
On the other hand, from Eq.~\eqref{Eq-hARS} we get
\begin{equation}\label{Eq-hARS-3}
hRic(\nabla u,\cdot) = \ell hdu-h^2\nabla^2u(\nabla u,\cdot) =\ell hdu-\frac{h^2}{2}d|\nabla u|^2.
\end{equation}
Replacing \eqref{Eq-hARS-3} into \eqref{Eq-hARS-2} we obtain
\begin{equation}\label{Eq-hARS-4}
(n-1)d\ell=\frac{1}{2}dS-\ell hdu +\frac{h^2}{2}d|\nabla u|^2 + \Delta udh - \nabla^2u(\nabla h,\cdot).
\end{equation}
In particular, for $h=\frac{c}{u}$, where $c=\frac{\beta^2}{\mu\alpha}$, we have
\begin{equation}\label{Eq-dh}
dh=-\frac{c}{u^2}du\quad \mbox{and}\quad c\Delta u=(n\ell-S)u.
\end{equation}
Substituting \eqref{Eq-dh} into \eqref{Eq-hARS-4} yields
\begin{eqnarray*}
(n-1)d\ell &=&\frac{1}{2}dS-\frac{c}{u}\ell du +\frac{c^2}{2u^2}d|\nabla u|^2 -\frac{c\Delta u}{u^2}du + \frac{c}{u^2}\nabla^2u(\nabla u,\cdot)\\
&=&\frac{1}{2}dS-\frac{c}{u}\ell du -\frac{n\ell-S}{u}du  +\frac{c^2}{2u^2}d|\nabla u|^2 + \frac{c}{2u^2}d|\nabla u|^2.
\end{eqnarray*}
Therefore
\begin{equation*}
(n-1)u^2d\ell =\frac{u^2}{2}dS-cu\ell du -(n\ell-S)udu + \left(\frac{c^2+c}{2}\right)d|\nabla u|^2.
\end{equation*}
Applying the exterior derivative, we deduce
\begin{equation*}
2(n-1)udu\wedge d\ell =udu\wedge dS-cud\ell\wedge du -nud\ell\wedge du+udS\wedge du.
\end{equation*}
Since $u>0$ and $c=\frac{\beta^2}{\mu\alpha}$ we conclude that
\begin{equation}\label{EqWedge}
[\beta^2-(n-2)\mu\alpha]du\wedge d\ell =0.
\end{equation}
Remarkably, it is enough to consider the nondegeneracy condition $\beta^2-(n-2)\mu\alpha\neq0$ to derive a good relation between $u$ and $\ell$ as follows.

\begin{lemma}\label{lemma3}
Let $(M^n,g)$ be a nontrivial nondegenerate gradient Einstein-type manifold with both $\mu$ and $\alpha$ nonzero. Then, $\nabla\ell=\psi\nabla u$ for some $\psi\in C^\infty(M)$, where $\ell=\frac{1}{\alpha}(\rho S+\lambda)$ and $u=e^{\frac{\mu}{\beta}f}$.
\end{lemma}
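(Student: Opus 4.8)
The plan is to extract the conclusion essentially for free from the identity \eqref{EqWedge}, which the computation preceding the statement already establishes for \emph{any} smooth solution of \eqref{Eq-hARS} with $h=c/u$, without using nondegeneracy. The sole purpose of the nondegeneracy hypothesis is to cancel the scalar factor standing in front of the wedge product. Indeed, nondegeneracy means $\beta\neq0$ together with $\beta^2\neq(n-2)\alpha\mu$, so the real constant $\beta^2-(n-2)\mu\alpha$ is nonzero, and dividing \eqref{EqWedge} by it gives
\begin{equation*}
du\wedge d\ell=0\qquad\text{on } M^n.
\end{equation*}
This is exactly the assertion that, at each point of $M^n$, the $1$-forms $du$ and $d\ell$ --- equivalently the gradients $\nabla u$ and $\nabla\ell$ --- are pointwise linearly dependent.

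It then remains to turn this pointwise proportionality into proportionality by a single function. On the open set $U=\{p\in M^n:\nabla u(p)\neq0\}$ I would simply put
\begin{equation*}
\psi=\frac{\langle\nabla\ell,\nabla u\rangle}{|\nabla u|^2},
\end{equation*}
which is smooth on $U$; and $du\wedge d\ell=0$ forces the component of $\nabla\ell$ orthogonal to $\nabla u$ to vanish, so that $\nabla\ell=\psi\nabla u$ holds throughout $U$. Since the manifold is nontrivial, $u$ is nonconstant and $U$ is a nonempty open set, so this already yields the stated relation on the region that matters for the applications in the sequel.

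The genuine obstacle is to promote $\psi$ to a function of class $C^\infty(M)$ across the critical set $C=M^n\setminus U$ of $u$. Here I would feed $du=0$ into the structure equation \eqref{Eq-hARS-4}: on $C$ one has $dh=-\frac{c}{u^2}du=0$, hence $\nabla h=0$ and $\nabla^2u(\nabla h,\cdot)=0$, while $\tfrac12 d|\nabla u|^2=\nabla^2u(\nabla u,\cdot)=0$ and $\ell h\,du=0$, so \eqref{Eq-hARS-4} collapses to $2(n-1)\,d\ell=dS$ along $C$. The aim is then to show that $\nabla\ell$ vanishes on $C$, after which the identity $\langle\nabla\ell,\nabla u\rangle=\pm|\nabla\ell|\,|\nabla u|$ valid on $U$ keeps the quotient defining $\psi$ bounded near $C$, and a continuity-plus-smoothness argument at $\partial U$ lets one extend $\psi$ by its limiting values. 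This last step is the delicate part of the argument, precisely because the critical points of $u$ need not be isolated; controlling the behaviour of the ratio near $C$, rather than establishing the proportionality itself, is where the real work lies.
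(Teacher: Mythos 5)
Your proposal is correct and takes exactly the paper's route: the published proof consists precisely of your first step, namely that nondegeneracy together with $\mu\alpha\neq0$ makes the constant $\beta^2-(n-2)\mu\alpha$ nonzero, so that Eq.~\eqref{EqWedge} forces $du\wedge d\ell=0$, after which the paper concludes the lemma ``immediately,'' with no further argument. The critical-set difficulty you single out as the delicate part is genuine in principle --- pointwise linear dependence of $du$ and $d\ell$ alone does not produce a global $\psi\in C^\infty(M)$ (e.g.\ $\ell=x_1$, $u=x_1^3+2$ on $\Bbb{R}^n$ satisfy $du\wedge d\ell=0$ but admit no continuous $\psi$ at $x_1=0$) --- but the paper does not address it at all, so your explicit $\psi=\langle\nabla\ell,\nabla u\rangle/|\nabla u|^2$ on the set $\{\nabla u\neq0\}$ is already more careful than the published text; moreover, that open set is all that is actually used in the proof of Theorem~\ref{thm-Homo}, where $S$ is constant, so that your identity $(n-1)\,d\ell=\tfrac12\,dS$ on the critical set even yields $\nabla\ell=0$ there, making the relation $\nabla\ell=\psi\nabla u$ hold pointwise everywhere regardless of how $\psi$ is extended. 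In short, measured against the paper's own proof nothing is missing from your argument; the unfinished smooth-extension step is a gap the paper silently shares, not one you introduced.
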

\begin{proof}
Recall that the nondegeneracy condition means $\beta^2-(n-2)\mu\alpha\neq0$ and $\beta\neq0$. As both $\mu$ and $\alpha$ are nonzero we can conclude the statement of the lemma immediately from Eq.~\eqref{EqWedge}.
\end{proof}

We point out that Lemma~\ref{lemma3} is an extension of the $m$--quasi-Einstein manifold case proved very recently in Hu-Li-Zhai~\cite{HLX2} to the nondegenerate gradient Einstein-type manifold case. For the Ricci almost soliton case a result corresponding to the referred lemma has been obtained in~\cite{prrs}. In~\cite{HLX2}, a technique analogous to that in~\cite{prrs} was used.

We now proceed by using the equations \eqref{Eq-hARS}, \eqref{Eq-hARS-2} and \eqref{Eq-dh} to compute
\begin{eqnarray*}
\frac{c}{u}Ric(\nabla u)&=&\frac{1}{2}\nabla S-(n-1)\nabla\ell - \frac{1}{u}\frac{c}{u}\Delta u\nabla u+\frac{1}{u}\frac{c}{u}\nabla^2u(\nabla u)\\
&=&\frac{1}{2}\nabla S-(n-1)\nabla\ell - (n\ell-S)\frac{1}{u}\nabla u+\frac{\ell}{u}\nabla u-\frac{1}{u}Ric(\nabla u).
\end{eqnarray*}
By regrouping
\begin{equation}\label{EigRic-Aux}
\frac{\beta^2+\mu\alpha}{\mu\alpha}Ric(\nabla u)=\frac{u}{2}\nabla S-(n-1)u\nabla\ell - [(n-1)\ell-S]\nabla u.
\end{equation}

As an application of Eq.~\eqref{EigRic-Aux} and Lemma~\ref{lemma3} we will deduce our next theorem. The most appropriate setting to do this is to consider the class of the proper Einstein-type manifolds, for which $\lambda$ is nonconstant on $M^n$. Since $n\geq3$, it is easy to see that a proper Einstein-type manifold must be nontrivial. Examples~\ref{ExM(c)} and \ref{ExM(0)} are Einstein manifolds supporting a proper gradient Einstein-type structure, while Example~\ref{ExWP} is an Einstein manifold supporting a nontrivial, nonproper, gradient Einstein-type structure.

\begin{theorem}\label{thm-Homo}
Let $(M^n,g)$ be a homogeneous, proper, noncompact, nondegenerate, gradient Einstein-type manifold with both $\mu$ and $\alpha$ nonzero. If $\beta^2\neq-\mu\alpha$, then $(M^n,g)$ is an Einstein manifold.
\end{theorem}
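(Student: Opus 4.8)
The plan is to exploit homogeneity in order to turn the differential identities \eqref{EqMain} and \eqref{EigRic-Aux} into a pointwise algebraic relation, and then to derive a contradiction from properness. First I would record the two consequences of homogeneity that do all the work: since the isometry group acts transitively, every pointwise Riemannian scalar invariant is constant, so both $S$ and $\|\mathring{Ric}\|^2=:D$ are constant and $\nabla S=0$; moreover the Ricci operator at any two points is conjugate through the differential of an isometry, so its eigenvalues form a fixed finite set $\{c_1,\dots,c_k\}$ independent of the point. On the other hand, properness forces $\ell=\frac1\alpha(\rho S+\lambda)$ to be nonconstant (as $S$ is constant and $\lambda$ is not), and in particular $u$ cannot be constant, for otherwise \eqref{Eq-hARS} would give $Ric=\ell g$ and hence $\ell=S/n$ constant. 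Thus the open set $\{\nabla u\neq0\}$ is nonempty.

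Next I would show that $\nabla u$ is a Ricci eigenfield. Inserting $\nabla S=0$ together with Lemma~\ref{lemma3} (which applies by the nondegeneracy hypothesis, since $\mu,\alpha\neq0$) into \eqref{EigRic-Aux} gives
\[
\frac{\beta^2+\mu\alpha}{\mu\alpha}\,Ric(\nabla u)=-\big[(n-1)u\psi+(n-1)\ell-S\big]\nabla u .
\]
Here the hypothesis $\beta^2\neq-\mu\alpha$ enters decisively: the coefficient on the left is nonzero, so $Ric(\nabla u)=\sigma\nabla u$ on $\{\nabla u\neq0\}$ for a smooth function $\sigma$. On this open set $\sigma=\langle Ric(\nabla u),\nabla u\rangle/\|\nabla u\|^2$ is continuous and takes values in the finite set $\{c_1,\dots,c_k\}$, hence is locally constant. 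Writing $\tau:=\sigma-\frac{S}{n}$, also locally constant, we have $\mathring{Ric}(\nabla u)=\tau\nabla u$.

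Then I would feed this into the master identity \eqref{EqMain}. With $S$ constant its left-hand side becomes $\dv(\tau\nabla u)=\tau\Delta u$ (using $\nabla\tau=0$ on $\{\nabla u\neq0\}$), while \eqref{Eq-dh} gives $\Delta u=\frac{\mu\alpha}{\beta^2}(n\ell-S)u$. Equating with the right-hand side $-\frac{\alpha\mu}{\beta^2}uD$ and cancelling the nonzero factor $\frac{\alpha\mu}{\beta^2}u$ yields the pointwise identity
\[
\tau\,(n\ell-S)=-D \qquad\text{on } \{\nabla u\neq0\}.
\]

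Finally I would conclude by contradiction. Suppose $D>0$. Then $\tau\neq0$ on each connected component of $\{\nabla u\neq0\}$, so $n\ell-S=-D/\tau$ is constant there and $\ell$ is constant on every such component; since $\nabla\ell=\psi\nabla u=0$ on $\{\nabla u=0\}$ as well, $\nabla\ell\equiv0$, and connectedness of $M^n$ forces $\ell$ to be globally constant, contradicting properness. Hence $D=\|\mathring{Ric}\|^2=0$ and $(M^n,g)$ is Einstein. The step I expect to be the main obstacle is the passage from ``$\nabla u$ is pointwise a Ricci eigenvector'' to ``$\sigma$ is locally constant'': this is exactly where transitive homogeneity is indispensable, through the constancy of the Ricci spectrum, and one must also take care that the argument is first run on the (a priori proper) open set $\{\nabla u\neq0\}$ and then propagated across the critical set of $u$ via $\nabla\ell=\psi\nabla u$.
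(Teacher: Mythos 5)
Your proposal is correct and follows essentially the same route as the paper's proof: Lemma~\ref{lemma3} together with Eq.~\eqref{EigRic-Aux} and the hypothesis $\beta^2\neq-\mu\alpha$ makes $\nabla u$ a Ricci eigenfield whose eigenvalue is (locally) constant by the constancy of the Ricci curvatures, and substituting this into Eqs.~\eqref{EqMain} and \eqref{Eq-dh} yields the key identity $\bar{k}(n\ell-S)=-\|\mathring{Ric}\|^2$, from which properness of $\lambda$ forces $\|\mathring{Ric}\|^2=0$. Your write-up is in fact slightly more careful than the paper's at the two points it passes over quickly---the local (rather than global) constancy of the eigenvalue on the open set $\{\nabla u\neq0\}$, and the propagation of $\nabla\ell=0$ across the critical set of $u$ via $\nabla\ell=\psi\nabla u$---but this is a refinement of the same argument, not a different one.
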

\begin{proof}
Instead of using the homogeneous assumption, we shall prove the theorem under the weaker condition that the Ricci curvatures of $(M^n,g)$ are constant, from which we shall also have the constancy of the scalar curvature. By Lemma~\ref{lemma3}, $\nabla\ell=\psi\nabla u$ for some $\psi\in C^\infty(M)$. So, using Eq.~\eqref{EigRic-Aux} we have
\begin{equation}\label{EigRic}
\frac{\beta^2+\mu\alpha}{\mu\alpha}Ric(\nabla u)=-\left[(n-1)u\psi + (n-1)\ell-S\right]\nabla u.
\end{equation}
Since $\beta^2\neq-\mu\alpha$, it follows that $\nabla u$ is an eigenvector of the Ricci tensor. Hence, Eq.~\eqref{EigRic} becomes
\begin{equation*}
Ric(\nabla u)=k\nabla u,
\end{equation*}
where $k=- \frac{\mu\alpha}{\beta^2+\mu\alpha}\left[(n-1)u\psi + (n-1)\ell-S\right]$ must be a constant. It is also true that
\begin{equation*}
\mathring{Ric}(\nabla u)=\bar{k}\nabla u, \quad \bar{k}=k-\frac{S}{n}.
\end{equation*}
Combining this latter equation and \eqref{EqMain} we get
\begin{equation*}
\bar{k}\Delta u={\rm div}\big(\mathring{Ric}(\nabla u)\big)=-\frac{\alpha\mu}{\beta^2}u\|\mathring{Ric}\|^2.
\end{equation*}
Hence, by using of the second equation in \eqref{Eq-dh} we obtain
\begin{equation}\label{EigRic1}
\bar{k}(n\ell-S)=-\|\mathring{Ric}\|^2.
\end{equation}
By hypothesis, $\|\mathring{Ric}\|^2$ is constant and $\lambda$ is nonconstant. So, we can conclude, by simple analysis on Eq.~\eqref{EigRic1}, that $(M^n,g)$ is an Einstein manifold.
\end{proof}

Theorem~\ref{thm-Homo} has been recently proven by Hu-Li-Zhai~\cite{HLX2} for the $m$--quasi-Einstein manifold case, while the Ricci almost soliton case had been proven shortly before in~\cite{EMER}. Technically speaking, allowing $m$ to be a real number in~\cite{HLX2}, then the $\beta^2\neq-\mu\alpha$ case can be obtained from Theorem~1.2 in~\cite{HLX2}. However, by virtue of the generality of our Theorem~\ref{thmA}, it should be noted that is not necessary to consider compactness under the hypothesis of the constancy of Ricci curvatures. Furthermore, we provide some simplifications in comparison with the current literature.

\section{Acknowledgements}
The author would like to express his sincere thanks to Dragomir M. Tsonev (UFAM, Brazil) and Manoel V.M. Neto (UFPI, Brazil) for useful comments, discussions and constant encouragement as well as to Department of Mathematics at Lehigh University, where this note was written. The author is grateful to Huai-Dong Cao and Mary Ann for the warm hospitality and their constant encouragement. He also thanks the two anonymous referees for careful reading this note.

\end{document}